\newtheorem{lem}{Lemma}[section]
\newtheorem{thm}[lem]{Theorem}
\begin{document}
	\title{An excluded minor theorem for the Wagner graph plus an edge}
	\author{ Yuqi Xu, \quad Weihua Yang\footnote{Corresponding author. E-mail: ywh222@163.com,~yangweihua@tyut.edu.cn}\\
		\\ \small Department of Mathematics, Taiyuan University of
		Technology,\\
		\small  Taiyuan Shanxi-030024,
		China\\}
	\date{}
	\maketitle

{\small{\bf Abstract:}
Let $ V_{8}+e $ denote the unique graph obtained from the Wagner graph, also known as $ V_{8} $, by adding an edge between two vertices of distance $ 3 $ on the Hamilton cycle, which is exactly a split of a minor of the Petersen graph. A complete characterization of all internally 4-connected graphs with no $ V_{8} $ minor is given in [J. Maharry and N. Robertson, The structure of graphs not topologically containing the Wagner graph, J. Combin. Theory Ser. B 121 (2016) 398-420]. In this paper we characterize all internally $ 4 $-connected graphs with no $ V_{8}+e $ minor.

\vskip 0.5cm  Keywords: Excluded-minor ; Wagner graph ; Internally $ 4 $-connected

\section{Introduction}

All graphs in this article are simple. Let $ G $ and $ H $ be two graphs. $ H $ is called a $minor$ of $ G $ if $ H $ is obtained from $ G $ by deleting edges or contracting edges, denoted by $ H\preceq G $. $ G $ is $H$-$minor$-$free$ if no minor of $ G $ is isomorphic to $ H $. For a given graph $ H $, $ H $-minor-free graphs play an important role in graph theory. Determining $ K_{6} $-minor-free graphs and Petersen-minor-free graphs are the two most famous problems in this area.

Let $ k $ be a non-negative integer. A $ k $-\textit{separation} of a graph $ G $ is an unordered pair $\left\{G_{1}, G_{2}\right\}$ of induced subgraphs of $ G $ such that $ V(G_{1})\cup V(G_{2})=V(G), $ $ E(G_{1})\cup E(G_{2})=E(G) $, $ V (G_{1})-V (G_{2})\neq\emptyset $, $ V(G_{2})-V(G_{1})\neq\emptyset $, and $ |V(G_{1})\cap V(G_{2})|=k $. If $ G $ has a $ k $-separation, then there is $ X\subseteq V(G) $ such that $ |X|=k $ and $ G\setminus X $ has at least two components. A 3-connected graph $ G $ on at least five vertices is said to be \textit{quasi} $ 4 $-\textit{connected} if for every 3-separation $\left\{G_{1}, G_{2}\right\}$ of $ G $, one of $ G_{1} $ or $ G_{2} $ contains exactly 4 vertices. We define $ G $ to be \textit{internally} $ 4 $-\textit{connected} if for every 3-separation $\left\{G_{1}, G_{2}\right\}$ where $ G_{1}\cap G_{2}=\left\{x, y, z\right\} $, and there are no edges among $ x, y $ and $ z $.

Let $ G\setminus e $ denote the graph obtained from $ G $ by deleting an edge $ e $. The reverse operation of deleting an edge is adding an edge, that is $ G $ obtained from $ G\setminus e $ by adding edge $ e $. We use $ G/e $ denote the graph obtained from $ G $ by first contracting an edge $ e $ then deleting all but one edge from each parallel family. The reverse operation of contracting an edge is splitting a vertex. To be precise, suppose $ v $ is a vertex with degree at least four in a graph $ G $. Let $ N_{G}(v) $ denote the set of neighbors of $ v $, which are vertices adjacent to $ v $. Let $ X, Y\subseteq N_{G}(v) $ such that $ X\cup Y = N_{G}(v) $ and $ |X|, |Y|\geq2 $. The splitting $ v $ results in the new graph $ G^{'} $ obtained from $ G\setminus v $ by adding two new adjacent vertices $ x, y $ then joining $ x $ to all vertices in $ X $ and $ y $ to all vertices in $ Y $. We call $ G^{'} $ a \textit{split} of $ G $. Tutte \cite{W.T. Tutte} stated that every 3-connected graph can be obtained from a wheel by repeatedly adding edges and splitting vertices, this result is known as Tutte's Wheel Theorem. 

For each 3-connected graph $ H $ with at most 11 edges, $ H $-free graphs are characterized \cite{G. Ding}. There are three classical results for 3-connected graph with 12 edges: Maharry \cite{J. Maharry Cube} characterized $Cube$-minor-free graphs, Ding \cite{G. Ding Oct} characterized $Oct$-minor-free graphs, and Robertson \cite{J. Maharry and N. Robertson} characterized $V_{8}$-minor-free graphs.

To state the theorem we need to define a few classes of graphs. For any graph $ G $, the \textit{line graph} of $ G $, denoted by $ L(G) $, is a graph such that each vertex of $ L(G) $ represents an edge of $ G $, and two vertices of $ L(G) $ are adjacent if and only if their corresponding edges share a common end vertex in $ G $. For each integer $ n\geq3 $, a \textit{double}-\textit{wheel}, $ DW_{n} (n\geq3) $, is a graph on $ n+2 $ vertices obtained from a cycle $ C_{n} $ by adding two nonadjacent vertices $ u, v $ and joining them to all vertices on the cycle. An \textit{alternating double}-\textit{wheel} $ AW_{2n} $ is a subgraph of $ DW_{2n} (n\geq3) $ such that $ u $ and $ v $ are alternately adjacent to every vertex in $ C_{2n} $. Notice that $ AW_{6} $ is a cube, see Figure 1. For each integer $ n\geq3 $, let $ DW_{n}^{+} $ and $ AW_{2n}^{+} $ be graphs obtained from $ DW_{n} $ and $ AW_{2n} $, respectively, by joining $ u $ and $ v $. Let $ \mathcal{D}^{+}=\left\{DW_{n}^{+}: n\geq3\right\}\cup\left\{AW_{2n}^{+}: n\geq3\right\}. $ Then every graph in $ \mathcal{D}^{+} $ is nonplanar.

The $ n $-\textit{rung ladder}, $ L_{n} $, has vertices $ v_{1}, v_{2}, ..., v_{n} $ and $ u_{1}, u_{2}, ..., u_{n} $, where $ v_{1}, v_{2}, ..., v_{n} $ and $ u_{1}, u_{2}, ..., u_{n} $ form paths and each $ u_{i} $ is adjacent to $ v_{i} $ for $ i = 1, 2, ..., n $. The $ n $-\textit{rung Mobius ladder}, $ M_{n} $, is obtained from $ L_{n} $ by adding edges joining $ v_{1} $ to $ u_{n} $ and $ v_{n} $ to $ u_{1} $. The graph $ SM_{n} $ is obtained from $ DW_{n} $ by adding some number of triads to triangles of $ DW_{n} $ such that every triad is adjacent to at least two $ r_{i} $ and no two triads are adjacent to the same pair of $ r_{i} $. Let $ \mathbb{K}_{4,n} $ be the collection of all $ q $-$ 4 $-$ c $ minors of any $ K_{4,n} $. Let $ \mathbb{M}_{n} $ be the collection of all $ q $-$ 4 $-$ c $ minors of $ M_{n} $, and let $ \mathbb{SM}_{n} $ be the collection of all $ q $-$ 4 $-$ c $ minors of $ SM_{n} $.

\begin{thm}[\cite{J. Maharry and N. Robertson}]\label{thm1.1}
Every internally 4-connected $ V_{8} $-minor-free graph $ G $ satisfices one of the following conditions:
	
(i) $ G $ is planar,

(ii) $ |G|\leq7 $,

(iii) $ G\cong L(K_{3,3}) $,

(iv) $ G\setminus\left \{w,x,y,z\right \} $ has no edges for some $ w,x,y,z\in V(G) $,

(v) $ G\in \mathcal{D}^{+} $.
\end{thm}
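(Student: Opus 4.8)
The plan is to separate the routine \emph{sufficiency} direction from the substantive \emph{necessity} direction, and to attack necessity by a maximal-substructure-plus-bridge analysis. First I would record that conditions (i) and (ii) need no work: since $V_{8}$ is non-planar, every planar graph is automatically $V_{8}$-minor-free, and since $|V(V_{8})|=8$, no graph on at most $7$ vertices can have a $V_{8}$ minor. I would then verify by direct inspection that $L(K_{3,3})$, every graph with a $4$-element vertex cover, and every member of $\mathcal{D}^{+}$ is $V_{8}$-minor-free. The useful preliminary remark here is that $V_{8}$ is cubic, so by the standard fact that a graph of maximum degree at most $3$ is a minor of $G$ if and only if it is a topological minor of $G$, ``contains $V_{8}$ as a minor'' and ``contains a subdivision of $V_{8}$'' coincide; I would use the topological formulation throughout, matching the title of the cited work.

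For necessity, assume $G$ is internally $4$-connected, $V_{8}$-minor-free, non-planar, and $|V(G)|\ge 8$; the goal is to force (iii), (iv), or (v). Since $G$ is non-planar it contains a subdivision of $K_{3,3}$ or $K_{5}$, and internal $4$-connectivity upgrades this to a large, highly structured model. The plan is to fix a subgraph model $H\subseteq G$ of the largest structure drawn from the target families --- concretely, the largest double wheel $DW_{n}$, alternating double wheel $AW_{2n}$, or $K_{4,n}$-type graph that embeds --- and to study the bridges of $H$ in $G$. Internal $4$-connectivity controls these bridges (every $3$-separation has a small side, so no bridge can be simultaneously large and loosely attached), while $V_{8}$-freeness forbids any attachment that would let me route the four rungs of the M\"obius ladder $M_{4}$. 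The core combinatorial claim is that these two constraints together leave no room for a bridge that is not already absorbed into the current family, so $H$ is essentially all of $G$.

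The engine I would try to use to make this inductive, rather than a single monolithic analysis, is a chain theorem for internally $4$-connected graphs (established as a preliminary if needed): every sufficiently large internally $4$-connected graph reduces, by a bounded local operation such as an edge deletion or contraction followed by suppression of the resulting low-degree vertices, to a smaller internally $4$-connected graph. Taking $G$ to be a minimal counterexample, its reduction $G'$ then lies in one of the families, and the task becomes to classify the ways $G$ can expand $G'$ by one step without creating a $V_{8}$ subdivision, with the aim of showing each admissible expansion stays inside the same family. In this picture the case $G'\in\mathcal{D}^{+}$ is where the $DW_{n}^{+}$/$AW_{2n}^{+}$ dichotomy is preserved, case (iv) is where expansions keep the vertex cover at size $4$, and $L(K_{3,3})$ appears as a sporadic fixed point.

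The step I expect to be the main obstacle is exactly this bridge/expansion analysis: enumerating, without omission, every way a bridge can attach to the maximal substructure (equivalently, every single-step expansion of a family member) and checking in each case either that a $V_{8}$ subdivision appears or that the graph remains in one of the three families. The difficulty is compounded by having to respect internal $4$-connectivity simultaneously --- many attachments that avoid $V_{8}$ would violate the connectivity hypothesis, and conversely --- and by the need to treat $L(K_{3,3})$, the vertex-cover-$4$ graphs, and $\mathcal{D}^{+}$ uniformly enough that the cases do not proliferate. I would expect the genuinely delicate subcase to be separating the double-wheel regime $DW_{n}^{+}$ from the alternating regime $AW_{2n}^{+}$, since both arise from adjoining a dominating pair of vertices and only the parity of their adjacencies along the cycle distinguishes them.
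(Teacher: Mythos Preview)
This theorem is not proved in the paper: it is quoted, with citation, from Maharry and Robertson \cite{J. Maharry and N. Robertson} and functions here only as background that motivates and parallels the paper's own result on $V_{8}+e$. There is consequently no proof in this paper to compare your proposal against.

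As an outline of how one might attack the Maharry--Robertson theorem itself, your plan is broadly sensible but remains only a plan: the bridge/expansion case analysis you identify as ``the main obstacle'' is essentially the entire content of the theorem, and the cited paper spends over twenty pages carrying it out. Two remarks are worth making. First, the chain-theorem strategy you suggest (reduce a minimal counterexample by one step inside the internally $4$-connected class and show each admissible expansion stays in a target family) is exactly the method the \emph{present} paper uses to go from $V_{8}$ to $V_{8}+e$, taking Theorem~\ref{thm1.1} as a black box; it is not the method of Maharry--Robertson, whose argument proceeds via topological containment and an analysis of how a $K_{3,3}$ or $K_{5}$ subdivision can sit inside an internally $4$-connected non-planar graph. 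Second, your ``sufficiency'' paragraph is not quite right as stated: condition (iv) does not by itself guarantee $V_{8}$-freeness without the internal $4$-connectivity hypothesis (one can add edges among $w,x,y,z$ freely), so the easy direction also uses the connectivity assumption, not just the family description.
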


Our main tool is a chain theorem of Chun, Mayhew and Oxley \cite{C. Chun}, which will be useful in creating a process that generates all internally 4-connected graphs. To explain this result we need a few definitions. For each integer $ n\geq 5 $, let $ C_{n}^{2} $ be a graph obtained from a cycle $ C_{n} $ by joining all pairs of vertices of distance two on the cycle. Notice that $ C_{5}^{2}=DW_{3}^{+}=K_{5} $, see Figure \ref{t1}. Let \textit{terrahawk} be the graph shown in Figure 1, which can be obtaind from a cube by adding a new vertex and joining it to four vertices in the same $ C_{4} $.

\input{1.tpx}

\begin{thm}[\cite{C. Chun}]\label{thm1.2}
Let $ G $ be an internally 4-connected graph such that $ G $ is not $ K_{3,3} $, terrahawk, $ C_{n}^{2} $ $ (n\geq5) $, or $ AW_{2n} $ $ (n\geq3) $. Then $ G $ has an internally 4-connected minor $ H $ with $ 1\leq ||G||-||H||\leq3 $.
\end{thm}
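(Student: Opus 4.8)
The plan is to argue by contradiction. Suppose $G$ is internally 4-connected, is none of $K_{3,3}$, the terrahawk, $C_n^2$ $(n\ge5)$, or $AW_{2n}$ $(n\ge3)$, and yet $G$ has no internally 4-connected minor $H$ with $1\le ||G||-||H||\le3$. The strategy is to show that these hypotheses force $G$ to coincide with one of the excluded graphs, a contradiction. The admissible single-edge reductions are deleting an edge $e$, which lowers the edge count by exactly one, and contracting an edge $e=uv$, which lowers it by one plus the number of common neighbours of $u$ and $v$ and hence stays within the allowed range $\{1,2,3\}$ precisely when $u$ and $v$ have at most two common neighbours. So if some edge is \emph{deletable}, meaning $G\setminus e$ is internally 4-connected, or is \emph{contractible} with $||G||-||G/e||\le3$, we are immediately done. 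The whole difficulty is concentrated in the case where no single edge is deletable or contractible.

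First I would analyse why a single reduction fails. For a fixed edge $e=uv$, passing to $G\setminus e$ or to $G/e$ can destroy internal 4-connectivity in only two ways: either 3-connectivity itself is lost, producing a vertex cut of size at most two, or a new 3-separation appears whose separator carries an edge, violating the triangle-free condition in the definition of internal 4-connectivity. Each such obstruction is a small vertex separation of $G$ localised near $e$, and these separations can be controlled through the submodularity of the vertex-connectivity function: if $(A_1,B_1)$ and $(A_2,B_2)$ are the 3-separations arising from two failed reductions, then appropriate uncrossings of them again have order at most three. Iterating this forces a rigid local picture at every edge, with each vertex of small degree and surrounded by triangles and triads, which is exactly the structure that persists in the highly symmetric families $C_n^2$ and $AW_{2n}$.

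Next, granting that no single edge works, I would look for compound reductions that remove two or three edges at once: deleting a pair of edges, contracting a pair, or a mixed delete-and-contract operation. The point is that the local rigidity established above leaves only finitely many local configurations, and for each one either some pair or triple of incident edges can be reduced simultaneously while restoring internal 4-connectivity, or the configuration propagates around the whole graph and closes it up into one of the four excluded graphs. Here $K_{3,3}$ and the terrahawk are the small sporadic closures, while $C_n^2$ and $AW_{2n}$ are the two infinite ladder-like families in which every $1$-, $2$-, and $3$-edge reduction provably breaks internal 4-connectivity.

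The main obstacle is the exhaustive case analysis in this middle step. The failure of all single-edge reductions produces many overlapping 3-separations, and organising them, by repeatedly uncrossing via submodularity and bounding the resulting configurations, is delicate. The genuinely hard part is to separate those configurations that still admit a legitimate reduction of size two or three from those that are truly irreducible; the latter must be shown to be precisely the four excluded graphs, and verifying this irreducibility for the infinite families $C_n^2$ and $AW_{2n}$, by checking that no choice of at most three edges can be deleted or contracted without creating a cut of order at most two or an edge inside a 3-separator, is where most of the technical effort is spent.
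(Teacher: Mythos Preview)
The paper does not prove this theorem at all: Theorem~1.2 is quoted from Chun, Mayhew and Oxley (reference~\cite{C. Chun}) and is used only as a black-box tool to drive the generation process in Sections~2--4. There is therefore no proof in the present paper to compare your proposal against.

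As for your sketch on its own merits: the broad shape---assume no single deletion or contraction works, analyse the resulting local obstructions via uncrossing of 3-separations, then show that either a two- or three-edge compound move succeeds or the rigidity propagates globally to force one of the four exceptional graphs---is a reasonable caricature of how chain theorems of this type are proved. But it remains a caricature. The actual Chun--Mayhew--Oxley argument is carried out for internally 4-connected binary matroids, not graphs, and the bulk of the work lies in a long and highly structured case analysis of specific ``fan'' and ``ladder'' configurations, together with precise lemmas identifying which compound operations (certain delete--contract pairs and triples) preserve internal 4-connectivity. Your proposal names the right ingredients---submodularity/uncrossing, local rigidity, compound moves, closure into the exceptional families---but does not supply any of the configurations or the verifications, and phrases like ``only finitely many local configurations'' and ``for each one either some pair or triple \ldots\ can be reduced'' are exactly where the real content lives. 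So the outline is not wrong, but it is not yet a proof; and in any case it cannot be checked against the present paper, which simply cites the result.
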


Equivalently, this theorem says that, for every internally 4-connected graph $ G $, there exists a sequence of internally 4-connected graphs $ H_{0}, H_{1}, H_{2}, ... , H_{k} $ such that

(i) $ H_{k} \cong G $ and $ H_{0} $ is $ K_{3,3} $, terrahawk, $ C_{n}^{2} $ $ (n\geq5) $, or $ AW_{2n} $ $ (n\geq3) $, and

(ii) $ H_{i} (i = 2, ... , k) $ is obtained from $ H_{i-1} $ by adding edges or splitting vertices at most three times.

Let $ G $ be an internally 4-connected graph. It is not difficult to show that: if $ G+e $ is not internally 4-connected, then the two ends of $ e $ are neighbors of a cubic vertex; if splitting a vertex of $ G $ results in a graph $ G^{'} $ that is not internally 4-connected, then one of the new vertices is cubic and is contained in a triangle. For above two cases, we mainly generate all internally 4-connected graphs by adding an edge, reducing the cubic vertices to new ones of degree 4, or splitting the neighbors of the cubic vertices.

The class of graphs with no minor isomorphic to Petersen, we will denote by $ P_{0} $, has been widely studied. But there is no known exact structual characterization. However, the problem becomes a bit more manageable from the other direction, specifically, when we consider the graphs that are minors of $ P_{0} $. Naturally, if a graph $ G $ were a minor of $ P_{0} $, and a third graph $ H $ did not contain $ G $ as a minor, we could safely say that $ H $ does not contain $ P_{0} $ either. Using the Splitter Theorem \cite{P. Seymour}, we may grow $ K_{5} $ to $ P_{0} $ creating the sequence of graphs in Figure \ref{t2}.

\input{2.tpx}

For 3-connected graph with 13 edges, only $Oct+e$ and $V_{8}+f$, isomorphic to $ P_{2} $, are characterized. 

\begin{thm}[\cite{J. Maharry}]
Every 4-connected graph that does not contain a minor isomorphic to $ Oct^{+} $ is either planar or the square of an odd cycle.
\end{thm}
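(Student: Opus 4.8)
The plan is to argue directly: assume $G$ is $4$-connected and has no $Oct^{+}$ minor, and show that if $G$ is non-planar then $G\cong C_{n}^{2}$ for an odd $n$. Since a $4$-connected graph has no $3$-separation at all, it is in particular internally $4$-connected, so Theorem \ref{thm1.2} is available as an inductive engine; note also that it is full $4$-connectivity that lets us discard the non-planar base graph $K_{3,3}$ (which is only cubic) and the planar bases (the terrahawk and the $AW_{2n}$), leaving $C_{5}^{2}=K_{5}$ as the only admissible non-planar starting point. The whole difficulty is concentrated in the relation $Oct\preceq Oct^{+}$: forbidding $Oct^{+}$ is weaker than forbidding $Oct$, so $G$ may well contain octahedron minors, and I must show that in a $4$-connected graph every such minor can be promoted to an $Oct^{+}$ minor unless $G$ has the rigid square-of-a-cycle shape.

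I would split on whether $G$ contains an $Oct=K_{2,2,2}$ minor. If it does not, I invoke the known structure of octahedron-minor-free graphs \cite{G. Ding Oct}; intersecting that class with the $4$-connected non-planar graphs leaves only a short explicit list, and each member is checked by hand to be a square of an odd cycle (in particular $K_{5}=C_{5}^{2}$). This case produces the small squares and serves as the base.

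The substantial case is when $G$ does contain $Oct$. Fix a minor model of $Oct$ with branch sets $A_{1},A_{2},B_{1},B_{2},C_{1},C_{2}$, chosen minimal for a suitable order, where $\{A_{1},A_{2}\}$, $\{B_{1},B_{2}\}$, $\{C_{1},C_{2}\}$ are the three antipodal pairs carrying the three non-edges of the octahedron. The key sub-claim is an augmentation principle: if, after deleting the other four branch sets from $G$, the sets $A_{1}$ and $A_{2}$ still lie in one component, then a path between them can be contracted into the model to create the thirteenth edge $a_{1}a_{2}$, yielding an $Oct^{+}$ minor and a contradiction. Hence for each antipodal pair the remaining four branch sets must separate it, and because $G$ is $4$-connected these separators are as small as possible, driving the six branch sets to be single vertices whose attachments wrap once around a cycle with each vertex joined exactly to its distance-one and distance-two neighbours. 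I would then show that any further vertex of $G$ must be spliced into this pattern, lengthening the cycle by one, and that $4$-connectivity forbids chords or pendant structure; a parity check (the octahedron's antipodal matching closes up consistently, without re-creating an augmenting path, only when the cycle length is odd) gives $G\cong C_{n}^{2}$ with $n$ odd. Alternatively one can run Theorem \ref{thm1.2} and verify that each edge-addition or vertex-split preserving $4$-connectivity and $Oct^{+}$-freeness advances $C_{n}^{2}$ to $C_{n+2}^{2}$.

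The main obstacle is the augmentation principle together with the rigidity it forces. The delicate interaction is between minimality of the chosen $Oct$ model and the abundance of disjoint paths supplied by $4$-connectivity: one must be sure that no such path can be rerouted, through vertices outside the model or through a branch set taken too large, to create the missing antipodal edge. Ruling this out uniformly is exactly the point where an extended and careful case analysis of how the branch sets attach to the rest of $G$ becomes unavoidable, and it is also where the oddness of $n$ ultimately originates.
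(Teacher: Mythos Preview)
The paper does not prove this theorem. It is quoted from \cite{J. Maharry} as background, alongside the $V_8+f$ result of \cite{A.B. Ferguson}, and no argument for it appears anywhere in the text. So there is no ``paper's own proof'' to compare your proposal against; any assessment has to be on the proposal's internal merits and against Maharry's original 2008 argument, not against this paper.

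On those merits, your use of Theorem~\ref{thm1.2} as an inductive engine has a real gap. The chain $H_0,H_1,\dots,H_k\cong G$ guaranteed by Chun--Mayhew--Oxley consists of \emph{internally} $4$-connected graphs, not $4$-connected ones. The fact that the terminal graph $G$ is $4$-connected does not force $H_0$ to be $4$-connected, so you cannot ``discard'' $K_{3,3}$, the terrahawk, or $AW_{2n}$ as starting points merely because they fail $4$-connectivity; the chain may perfectly well begin at one of them and only acquire full $4$-connectivity later. Likewise, $G$ being non-planar does not make the earlier $H_i$ non-planar. Your proposed induction along the chain would therefore have to track, at each step, whether $Oct^+$-freeness together with $4$-connectivity of the \emph{final} graph constrains the intermediate (merely internally $4$-connected, possibly planar) graphs, and that is a much more tangled analysis than you suggest.

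Your second line---split on whether $G$ contains an $Oct$ minor, and in the positive case argue that $4$-connectivity forces an augmenting path realizing the extra edge unless the model is rigidly arranged as in $C_n^2$---is closer in spirit to how such results are actually proved, and is essentially the shape of Maharry's own argument. Note, however, a chronology issue: you invoke Ding's $Oct$-free characterization \cite{G. Ding Oct} (2013) for the negative case, but Maharry's theorem is from 2008, so his proof cannot rest on it; he handles that case differently. Your augmentation sketch is plausible but, as you yourself acknowledge, the rigidity step (forcing single-vertex branch sets and the cyclic attachment pattern, and extracting the parity) is where all the work lies, and nothing you have written yet pins it down.
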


\begin{thm}[\cite{A.B. Ferguson}]
For every integer $ n\geq6 $, there exists a number $ N $ such that every non-planar $ q $-$ 4 $-$ c $ graph $ G $ of order at least $ N $ contains a $ P_{2} $ minor, unless $ G $ is a member of $ \mathbb{K}_{4,n} $, $ \mathbb{M}_{n} $, or $ \mathbb{SM}_{n} $.
\end{thm}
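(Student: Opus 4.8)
The plan is to prove the contrapositive in the form of a structural dichotomy: assuming that $G$ is a non-planar $q$-$4$-$c$ graph on at least $N$ vertices with no $P_2$ minor, I will show that $G\in\mathbb{K}_{4,n}\cup\mathbb{M}_n\cup\mathbb{SM}_n$. The starting observation is that $V_8$ is a minor of $P_2$ (indeed $P_2=V_8+f$), so every $V_8$-minor-free graph is automatically $P_2$-minor-free. This suggests splitting the argument according to whether or not $G$ contains a $V_8$ minor, and treating the two regimes with different tools.

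First I would dispose of the case that $G$ has no $V_8$ minor by invoking Theorem~\ref{thm1.1}. Non-planarity rules out conclusion (i), and choosing the threshold $N$ large rules out the bounded cases (ii) and (iii). This leaves (iv) and (v). In case (iv) the set $\{w,x,y,z\}$ is a vertex cover of size four, so $G$ embeds in $K_4\vee\overline{K_{|G|-4}}$; realising each edge inside the cover by routing through a private vertex of $B$ exhibits $G$ as a minor of some $K_{4,m}$, and since $G$ is $q$-$4$-$c$ this places $G\in\mathbb{K}_{4,n}$. In case (v), $G\in\mathcal{D}^{+}$, and here I would exhibit explicit minor models showing that the double-wheel graphs $DW_m^{+}$ and the alternating double-wheel graphs $AW_{2m}^{+}$ occur as $q$-$4$-$c$ minors of a suitable $SM_{m'}$, hence $G\in\mathbb{SM}_n$. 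This second placement is the fiddly part of the easy regime.

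The substantive case is when $G$ contains a $V_8$ minor but no $P_2$ minor. My plan is to take a maximal Möbius-ladder minor $M_k$ inside $G$ (which exists since $V_8=M_4\preceq G$) and to grow it using the Splitter Theorem~\cite{P. Seymour} together with the chain theorem, Theorem~\ref{thm1.2}: because $G$ is $q$-$4$-$c$ and far larger than the ladder, there must be further attachments of $G$ around the branch sets of $M_k$. The heart of the argument is a local lemma asserting that, apart from the two benign extensions --- prolonging the ladder to $M_{k+1}$, or inserting a triad to build an $SM$-type rung --- every way a $q$-$4$-$c$ graph can attach around a long Möbius ladder forces a distance-three chord on some eight-vertex sub-ladder, i.e.\ produces a $P_2=V_8+f$ minor. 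Iterating this dichotomy along all attachments, and using the abundance guaranteed by $|G|\geq N$ and $n\geq 6$ to ensure the sub-ladders are long enough for the chord detection to succeed, forces $G$ globally into a single $M_n$ or a single $SM_n$, whence $G\in\mathbb{M}_n\cup\mathbb{SM}_n$.

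I expect the main obstacle to be exactly this local lemma in the $V_8$-containing regime: one must show that the only $P_2$-free augmentations of a large Möbius ladder are ladder prolongations and admissible triad insertions, ruling out the combinatorial explosion of other attachments. The difficulty is two-fold --- first, the per-step case analysis (which edge additions and vertex splits keep the graph $P_2$-free) must be carried out uniformly in the ladder length rather than for a fixed small ladder, and second, local ladder-likeness must be upgraded to the global conclusion that $G$ is a minor of one honest $M_n$ or $SM_n$. Controlling this global coherence --- preventing two incompatible ladder extensions from coexisting without creating $P_2$ --- is where a tangle-type connectivity argument, powered by the quasi-$4$-connectivity of $G$ and the largeness threshold $N$, will be needed.
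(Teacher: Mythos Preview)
This theorem is not proved in the paper at all: it is quoted verbatim from Ferguson's thesis \cite{A.B. Ferguson} as background, with no argument supplied. There is therefore no ``paper's own proof'' against which your proposal can be compared.

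As to the proposal itself, the overall architecture---split on the presence of a $V_{8}$ minor, use Theorem~\ref{thm1.1} in the $V_{8}$-free case, and run a Splitter-type growth argument from a maximal M\"obius ladder in the $V_{8}$-containing case---is a reasonable blueprint and is in the spirit of how such excluded-minor theorems are typically established. But two points deserve caution. First, in the $V_{8}$-free branch you assert that every large member of $\mathcal{D}^{+}$ lands in $\mathbb{SM}_{n}$; since $SM_{n}$ is built from $DW_{n}$ (without the hub edge $uv$), it is not obvious that $DW_{m}^{+}$ or $AW_{2m}^{+}$ is a $q$-$4$-$c$ minor of some $SM_{n}$, and this step needs an explicit model rather than a hand-wave. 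Second, your acknowledged ``local lemma'' in the $V_{8}$-containing regime is genuinely the whole theorem: the uniform-in-$k$ classification of $P_{2}$-free attachments to $M_{k}$, together with the global coherence argument, is exactly the content of Ferguson's dissertation, and your sketch does not yet isolate a mechanism (beyond invoking quasi-$4$-connectivity) for why two locally benign extensions cannot combine to produce $P_{2}$. So the proposal is a plan, not a proof, and the paper offers nothing further to fill it in.
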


In this paper, we characterize another graph obtained by joining two vertices of distance $ 3 $ on the cycle from $ V_{8} $, denoted by $ V_{8}+e $ and shown in Figure 2. Note that it has 13 edges and it is exactly a split of $ P_{3} $.

Let $ \mathcal{E}(AW_{6}^{+}) $ denote the set of graphs obtained from $ AW_{6}^{+} $ by adding edges not construct an $ X $ (show in Figure \ref{t3}). The following is the main theorem of this article.

\begin{thm}\label{thm1.3}
Every internally 4-connected $ V_{8}+e $-minor-free graph $ G $ satisfices one of the following conditions:

(i) $ G $ is planar, 

(ii) $ |G|\leq7 $,

(iii) $ G\in \mathcal{E}(V_{8}+f) \cup \mathcal{E}(AW_{6}^{+}) $,

(iv) $ G $ is a minor of $ \Gamma $ and $ \Gamma_{2} $ in the Figure \ref{t24} or $ G\in \left\{AW_{2n}^{+}: n\geq4\right\} $.

\end{thm}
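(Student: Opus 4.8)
The plan is to build the proof on two facts from the excerpt: first, that $V_{8}\preceq V_{8}+e$, so that every $V_{8}$-minor-free graph is automatically $V_{8}+e$-minor-free and hence the class of Theorem~\ref{thm1.3} \emph{contains} the class of Theorem~\ref{thm1.1}; and second, that the chain theorem (Theorem~\ref{thm1.2}) lets one reach any internally $4$-connected graph from a short list of base graphs by repeatedly adding edges and splitting vertices. I would therefore first dispose of the $V_{8}$-minor-free graphs by checking that each of the five outcomes of Theorem~\ref{thm1.1} lands in one of the four outcomes here: (i) and (ii) transfer verbatim (and every planar graph is $V_{8}+e$-minor-free because $V_{8}+e$ is nonplanar), while $L(K_{3,3})$, the graphs with a four-vertex edge cover, and the members of $\mathcal{D}^{+}$ are sorted into the planar case, the family $\{AW_{2n}^{+}:n\ge4\}$, the sets $\mathcal{E}(V_{8}+f)$ and $\mathcal{E}(AW_{6}^{+})$, or the minors of $\Gamma,\Gamma_{2}$. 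The genuinely new content is the set of graphs that \emph{do} contain a $V_{8}$ minor yet avoid $V_{8}+e$, and the four families in (iii)--(iv) are exactly what is meant to capture them.

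For the main argument I would run an induction on $\|G\|$ driven by Theorem~\ref{thm1.2}. If $G$ is one of the excluded base graphs ($K_{3,3}$, terrahawk, $C_{n}^{2}$, or $AW_{2n}$) I settle it by hand, placing $K_{3,3}$ and terrahawk in (ii) and (i) and determining for which $n$ the graphs $C_{n}^{2}$ and $AW_{2n}$ acquire a $V_{8}+e$ minor, so that the survivors fall into the listed families. Otherwise Theorem~\ref{thm1.2} supplies an internally $4$-connected minor $H$ with $1\le\|G\|-\|H\|\le3$; since $H\preceq G$ the graph $H$ is again $V_{8}+e$-minor-free, so by induction $H$ is one of the listed graphs, and $G$ is recovered from $H$ by at most three edge-additions and vertex-splits. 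The step then reverses this reduction: for each admissible $H$ I enumerate all $G$ obtainable by these operations and keep only those that remain internally $4$-connected and $V_{8}+e$-minor-free.

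To keep the reverse operations finite and local I would lean on the remark recorded just before the statement: an edge-addition or vertex-split that breaks internal $4$-connectivity forces a cubic vertex incident with the new edge, respectively a new cubic vertex inside a triangle. This confines every growth step to a short list of configurations at the cubic vertices of $H$ and their neighbors. For each such configuration I would fix a $V_{8}$ minor present in (or created by) the operation and test whether the new edge or split completes it to a $V_{8}+e$ minor, using that $V_{8}+e$ arises as a split of the Petersen-minor $P_{3}$ (Figure~\ref{t2}). The boundary of each family is precisely the record of which completions are forbidden: the defining condition of $\mathcal{E}(AW_{6}^{+})$, that the added edges do not construct an $X$ (Figure~\ref{t3}), names exactly the edges that would supply the missing chord of $V_{8}+e$, and similarly for $\mathcal{E}(V_{8}+f)$. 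Showing each family closed under the admissible operations, together with the induction hypothesis, then forces $G$ into the stated list.

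The main obstacle will be the completeness and closure of the sporadic part, namely verifying that $\Gamma$ and $\Gamma_{2}$ of Figure~\ref{t24} really are the maximal internally $4$-connected $V_{8}+e$-minor-free graphs outside the infinite family $\{AW_{2n}^{+}\}$ and the two $\mathcal{E}(\cdot)$ sets, so that every $V_{8}$-containing $G$ is a minor of $\Gamma$ or $\Gamma_{2}$. The delicate and error-prone ingredient is the minor test itself: because $V_{8}+e$ differs from $V_{8}$ by a single edge, deciding whether a grown graph contains $V_{8}+e$ rather than merely $V_{8}$ depends on subtle choices of branch sets and linking paths, so each survivor must be certified to avoid $V_{8}+e$ under \emph{every} such choice while each discarded graph must be presented with an explicit $V_{8}+e$ model.
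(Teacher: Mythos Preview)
Your plan is essentially the paper's: invoke the chain theorem (Theorem~\ref{thm1.2}) and, for each of the four base graphs $K_{3,3}$, terrahawk, $C_{n}^{2}$, $AW_{2n}$, grow forward by edge-additions and vertex-splits, pruning whenever a $V_{8}+e$ minor appears and collecting the survivors; the paper packages this as four separate lemmas (Sections~2--4), one per base graph, and the ``$X$'' obstruction you identify is exactly the device used there. The one substantive difference is your opening move of invoking Theorem~\ref{thm1.1} to dispose of the $V_{8}$-minor-free graphs first: the paper never does this, and in fact it buys you nothing, because the forward growth from the four base graphs already sweeps up every internally $4$-connected $V_{8}+e$-free graph (including the $V_{8}$-free ones) uniformly. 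Worse, your detour creates a nontrivial side obligation---verifying that $L(K_{3,3})$, every $DW_{n}^{+}$, and every internally $4$-connected graph with a four-vertex cover lands in (i)--(iv)---that is not obviously easier than the main case analysis and does not shorten it, since in the inductive step your smaller minor $H$ can still be $V_{8}$-free and you must analyse growths from it regardless.
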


\input{24.tpx}

We mainly construct the graphs based on $ AW_{2n} $ $ (n\geq3) $, terrahawk, $ K_{3,3} $, and $ C_{n}^{2} $ $ (n\geq5) $  in Sections 2, 3 and 4 respectively, from which the Theorem 1.5 follows.

\section{Extension of $ AW_{2n} $}

In this section, we shall characterize the internally 4-connedted $ V_{8}+e $-minor-free graphs which are obtained from $ AW_{2n} $ by repeatedly adding edges or splitting vertices. Note that we always suppose that the new vertices reduced in a split have degree 3, since the other splits contain such a split as a minor, so dose latter in this article. 

\begin{lem}\label{lem2}
Let $ G\succeq AW_{2n} $ be an internally 4-connected graph. Then $ G $ is $ V_{8}+e $-minor-free if and only if $ G\in \mathcal{E}(AW_{6}^{+})\cup\left\{AW_{2n}^{+}: n\geq4\right\} $. 
\end{lem}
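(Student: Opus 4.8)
The plan is to prove Lemma~\ref{lem2} by an inductive ``growth'' argument based on the chain theorem (Theorem~\ref{thm1.2}). Since $G \succeq AW_{2n}$ is internally $4$-connected, by Theorem~\ref{thm1.2} there is a sequence of internally $4$-connected graphs $H_0, H_1, \dots, H_k$ with $H_0 \cong AW_{2m}$ for some $m$, $H_k \cong G$, and each $H_i$ obtained from $H_{i-1}$ by adding edges or splitting vertices at most three times. First I would verify the base case: $AW_{2n}$ itself is planar for all $n \ge 3$ (it is a subgraph of the planar $DW_{2n}$, or directly bipartite-planar), hence trivially $V_8+e$-minor-free, so the question is genuinely about which single-edge/single-split extensions preserve $V_8+e$-minor-freeness. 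The strategy is to characterize, at each growth step, exactly which added edges keep us inside the claimed family $\mathcal{E}(AW_6^+) \cup \{AW_{2n}^+ : n \ge 4\}$, and to show that any other addition creates a $V_8+e$ minor.

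The second step is the core case analysis. Starting from $AW_{2n}$, the vertices $u$ and $v$ are each adjacent to exactly $n$ alternating cycle-vertices, and the cycle-vertices are cubic. I would use the structural remark quoted in the excerpt: if $G+e$ fails to be internally $4$-connected then the endpoints of $e$ are neighbors of a cubic vertex, and if a split produces a non-internally-$4$-connected graph then a new vertex is cubic and lies in a triangle. This drastically limits the admissible operations. I would first add the edge $uv$ to pass from $AW_{2n}$ to $AW_{2n}^+$, and check directly that $AW_{2n}^+$ is still $V_8+e$-minor-free for all $n\ge 4$ but that $AW_6^+$ admits further edge additions; the distinction between $n=3$ and $n \ge 4$ is exactly where the two pieces of the target family split. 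For $n \ge 4$, I would argue that \emph{any} further edge addition to $AW_{2n}^+$ (beyond $uv$), or any vertex split, produces a $V_8+e$ minor, by exhibiting the required subdivision/contraction explicitly — using the long alternating rim to route the paths forming the Wagner-plus-edge pattern. For $n=3$, the rim is too short to force $V_8+e$ immediately, so one can keep adding edges; here I would show the admissible additions are precisely those that do not create the forbidden configuration $X$ of Figure~\ref{t3}, which is exactly the definition of $\mathcal{E}(AW_6^+)$.

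The two directions of the biconditional are then handled separately. For sufficiency (``if $G$ is in the family then it is $V_8+e$-minor-free''), I would check that each graph in $\mathcal{E}(AW_6^+)$ and each $AW_{2n}^+$ with $n\ge 4$ contains no $V_8+e$ minor, reducing to a finite check for the $\mathcal{E}(AW_6^+)$ members (finitely many graphs on $8$ vertices, since $AW_6^+$ has $8$ vertices) and to a single uniform argument for the infinite family $\{AW_{2n}^+\}$, exploiting the bipartite-like alternating structure that prevents the odd-length chords of $V_8+e$ from appearing. For necessity (``if $G$ is $V_8+e$-minor-free then it lies in the family''), the chain-theorem growth argument above shows that every admissible sequence of operations keeps $G$ inside the claimed set, while every inadmissible operation introduces the excluded minor.

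The hard part will be the necessity direction, specifically showing that once $n \ge 4$ no operation beyond adding $uv$ can avoid creating a $V_8+e$ minor: this requires carefully exhibiting the minor for each type of added chord or split, and ruling out the possibility that some clever combination of up to three operations per chain step sneaks past the single-step analysis. Managing the interaction of the ``at most three operations'' in Theorem~\ref{thm1.2} with the single-edge structural remark is the main obstacle; I would address it by arguing that any such multi-operation step that stays internally $4$-connected and $V_8+e$-minor-free can be re-decomposed into admissible single operations, so that it suffices to understand one edge-addition or one split at a time. The finite verification for $\mathcal{E}(AW_6^+)$ and the identification of the forbidden configuration $X$ should be routine by comparison.
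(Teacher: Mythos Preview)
Your overall architecture matches the paper's: grow from $AW_6$ (respectively $AW_{2n}$, $n\ge4$) by single edge-additions and vertex-splits, identify the forbidden $X$ configuration as the obstruction for $n=3$, and show that for $n\ge4$ nothing beyond adding $uv$ survives. The sufficiency direction is also easier than you indicate: $AW_{2n}^{+}\in\mathcal{D}^{+}$ is already $V_8$-minor-free by Theorem~\ref{thm1.1}, so no ``bipartite-like'' argument is needed there.

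However, two points in your plan are genuine gaps rather than routine details. First, your proposed fix for the ``up to three operations per step'' issue --- re-decomposing any admissible multi-operation step into a chain of single operations each of which remains internally $4$-connected --- is not something you can simply assert, and it is not how the paper proceeds. The paper instead performs single operations one at a time and \emph{explicitly allows non-internally-$4$-connected intermediates}: from $AW_6^{+}$ one first adds a single edge to get graphs $F_1,F_2,F_3$ that are \emph{not} $i$-$4$-$c$, and then continues splitting/adding from each $F_i$ (producing $F_1^1,F_1^2,F_3^1,F_3^2,\dots$), tracking these branches until each either exhibits a $V_8+e$ minor or reaches a dead end (two cubic vertices sharing a triangle, etc.). The termination of these branching processes $\mathcal{P}_1,\mathcal{P}_2$ is what actually replaces the three-operation bookkeeping; your re-decomposition claim would need its own proof and is likely false in general.

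Second, for $n=3$ you only discuss edge additions to $AW_6^{+}$, but you must also rule out \emph{splits} of graphs in $\mathcal{E}(AW_6^{+})$. The paper devotes a separate case analysis to this (the unique split of $AW_6^{+}$ already contains $V_8+e$; splits of $F_1,F_2,F_3$ are handled one by one). Without that, the necessity direction is incomplete: nothing in your outline prevents an internally $4$-connected $V_8+e$-minor-free graph on $9$ or more vertices from arising as a split of some member of $\mathcal{E}(AW_6^{+})$.
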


\begin{proof}
Observe that $ AW_{2n} $ is planar, thus is $ V_{8}+e $-minor-free. Since every graph in $ AW_{2n} $ contains $ AW_{6} $ as a minor, suppose that $ H_{0}\cong AW_{6} $. And $ AW_{6} $ contains no vertex of degree 4, we first suppose that $ H_{1} $ is obtained from $ AW_{6} $ by adding edges. The unique non-planar graph generated is $ AW_{6}^{+} $. To continue constructing other graphs, we first state the following result.

\noindent\textbf{Claim 1. }\textit{Let $ P $ be a 3-path with vertex set $\left\{ a_{1}, a_{2}, a_{3}, a_{4}\right\}$ in a ladder (see Figure \ref{t3}). Then adding an $ X $ to corners $ a_{1}, a_{2}, a_{3}, a_{4} $ will reduce to a $ V_{8}+e $-minor.
}
\input{3.tpx}

\input{4.tpx}

It can be seen from Figure \ref{t4} that adding an $ X $ to any such path, say path $ P_{12v6} $ in $ AW_{6} $ results in a $ V_{8}+e $-minor. Let $ \mathcal{F}(G)=\left\{e:G+e\succeq V_{8}+e\right\} $, which is the set of \textit{forbidden edges}. Then by symmetry, we have $ \big\{\left\{15,6u\right\}, \left\{1v,26\right\}, \left\{13,2u\right\}, \left\{24,3v\right\}, \left\{35,4u\right\}, \left\{46,5v\right\}\big\} \subseteq \mathcal{F}(AW_{6}) $. Therefore, any non-planar graph, with an exception $ AW_{6}^{+} $, is not $ V_{8}+e $-minor-free.

Thus suppose that $ H_{1}\cong AW_{6}^{+} $. Then by Theorem 1.1, $ AW_{6}^{+} $ is $ V_{8}+e $-minor-free. And clearly $ \mathcal{F}(AW_{6})\subseteq \mathcal{F}(AW_{6}^{+}) $. Similar to the analysis above, we also state that the edge pairs $ \big\{\left\{15,26\right\}, \left\{15,46\right\}, \left\{13,26\right\}, \left\{13,24\right\}, \left\{24,35\right\}, \left\{35,46\right\}\big\}\subseteq \mathcal{F}(AW_{6}^{+}) $ (see Figure \ref{t4}). 

Now we show that only such an addition results a $ V_{8}+e $-minor. It equals that any other addition resulting a $ V_{8}+e $-minor contains such an addition $ X $. Let $ M $ be a $ V_{8}+e $-minor-free graph obtained from $ AW_{6}^{+} $ by repeatly adding edges with $ \left|E(M)\right| $ maximal. The followings are the properties of  $ M $.

\textbf{P1}. $ M $ must contain some vertex of degree 7.

Let $ N_{1} $ be a maximal $ V_{8}+e $-minor-free graph, and each vertex of $ N_{1} $ has degree as large as possible. However, $ N_{1} $ with an additional edge $ 2u $ is still $ V_{8}+e $-minor-free, contradicting the maximality of $ N_{1} $. 

\textbf{P2}. There are no two  adjacent vertices of degree 7 in $ M $, except for the two vertices $ u $ and $ v $.

It is obviously that if two successive vertices are both of degree 7, say 4 and 5 (see $ N_{2} $ in Figure 6), then vertices 4, 5 and their neighbors will reduce a required path $ P_{3456} $. So dose $ N_{3} $ with $ d(v)=d(4)=7 $. This result deduces that $ M $ could not contain more than 5 vertices of degree 7. 

From above two properties, we can suppose that $ M $ contains at most 4 vertices of degree 7. Now we characterize all graphs $ M $.

If $ M $ contains 4 vertices of degree 7, then by symmetry, we have $ d(v)=d(1)=d(3)=d(5)=7 $, generating the graph $ M_{1} $.

If $ M $ contains 3 vertices of degree 7, then by symmetry, we have $ d(v)=d(1)=d(3)=7 $ or $ d(1)=d(3)=d(5)=7 $, both of which require another vertex of degree 7, isomorphic to $ M_{1} $.

If $ M $ contains 2 vertices of degree 7, then we obtain the graph $ M_{2} $ with $ d(u)=d(v)=7 $, the graph $ M_{3} $ with $ d(v)=d(1)=7 $ and generate a graph isomorphic to $ M_{1} $ with $ d(1)=d(3)=7 $. 

If $ M $ contains one vertices of degree 7, then we obtain the graph $ M_{4} $ with $ d(v)=7 $, and generate a graph isomorphic to $ M_{1} $ with $ d(1)=7 $.

\input{21.tpx}

Observe that any graph $ M_{i} $ with an additional edge contains such an addition $ X $ as a minor (see Figure \ref{t21}). Thus the statement of claim is true. 

Let $ \mathcal{E}(AW_{6}^{+}) $ denote the set of graphs obtained by adding edges (not in $ \mathcal{F}(AW_{6}^{+}) $) to $ AW_{6}^{+} $. Then any graph $ G\in \mathcal{E}(AW_{6}^{+}) $ is $ V_{8}+e $-minor-free. 

Next we consider the split of graphs in $ \mathcal{E}(AW_{6}^{+}) $. Firstly the unique split of $ AW_{6}^{+} $ (up to symmetry) contains a $ V_{8}+e $-minor. Secondly by symmetry we obtain three not internally 4-connected ($ i $-$ 4 $-$ c $) graphs $ F_{1} $ (obtained by adding 26), $ F_{2} $ (obtained by adding 36) and $ F_{3} $ (obtained by adding $ 2u $) from $ AW_{6}^{+} $. For $ F_{1} $, splitting vertex 2 results in two graphs with $ V_{8}+e $ minors and a not $ i $-$ 4 $-$ c $ graph $ F_{1}^{1} $. Since $ F_{1}^{1} $ contains two cubic vertices in a triangle, no addition is possible, and splitting vertex 6 generates two graphs with $ V_{8}+e $ minors and a not $ i $-$ 4 $-$ c $ graph $ F_{1}^{2} $, with 3 cubic vertices in a triangle, thus the process terminates, and we denote the process based on $ F_{1} $ by $ \mathcal{P}_{1} $. For $ F_{2} $, any split of it contains the split of $ AW_{6}^{+} $ as a minor, thus contains a $ V_{8}+e $-minor. For $ F_{3} $, splitting vertex 2 generates two not $ i $-$ 4 $-$ c $ graphs $ F_{3}^{1} $ and $ F_{3}^{2} $, then any split of $ F_{3}^{1} $ and $ F_{3}^{2} $ contains the split of $ AW_{6}^{+} $ as a minor. On the other hand, no addition to $ F_{3}^{1} $ is possble, since $ F_{3}^{1} $ has two cubic vertices in a triangle. And we can deduce by symmetry that any addition to $ F_{3}^{2} $ gives rise to a $ V_{8}+e $-minor, we denote the process based on $ F_{3} $ by $ \mathcal{P}_{2} $. Note that the additions of $ F_{i} $ are in $ \mathcal{E}(AW_{6}^{+}) $, and their splits contain the splits of $ F_{i} $ as minors, thus the process ends (see Figure \ref{t25}).

\input{25.tpx}

Finally, for $ AW_{2n} (n\geq4) $, it is easy to verfy (by induction) that both of $ AW_{2n} $ with an additional edge of distance 3 on the cycle and and split of $ AW_{2n} $ contain $ V_{8}+e $ minors. And $ AW_{2n} $ with additional edges of distance 2 either is planar or contains a $ V_{8}+e $-minor. Thus let $ H_{1}\cong AW_{2n}^{+}$, and $ C_{2n} $ be a cycle with vertex set $ \left\{ v_{1}, v_{2}, ... , v_{2n}\right\} $ in $ AW_{2n}^{+} $. Then we state the following result, which completes the proof of Lemma 2.1.

\noindent\textbf{Claim 2. }\textit{No addition to $ AW_{2n}^{+} $, $ n\geq4 $ is possible.}

Up to symmetry, we can join two vertices of distance 3 on the cycle $ C_{2n} $, denoted by \textit{type}-$ \uppercase\expandafter{\romannumeral1} $, and two vertices of distance 2 ($ uv_{j} $, $ vv_{i} $ or $ v_{i}v_{i+2} $), such that every two edges construct an $ X $ (reduce the graph with size 8 by contracting edges on $ C_{2n} $, but not in $ X $), denoted by \textit{type}-$ \uppercase\expandafter{\romannumeral2} $, and any two edges do not construct an $ X $, denoted by \textit{type}-$ \uppercase\expandafter{\romannumeral3} $. It is easy to verfy that $ AW_{2n}^{+} $ with additional edges of type-I and type-II give rise to a $ V_{8}+e $-minor. Therefore we assume that only edges of type-III is possible. 

\noindent{\bf Case 1.} Adding an edge $ uv_{j} $ or $ vv_{i} $ to $ AW_{2n}^{+} $, say $ uv_{j} $, generates a cubic vertex $ v_{j-1} $ (or $ v_{j+1} $) in a triangle, thus not $ i $-$ 4 $-$ c $. We first split its neighbors $ u $ (contains the split of $ AW_{6}^{+} $ as a minor) and $ v_{j} $ (contains $ \mathcal{P}_{2} $ as a minor) respectively, thus the process ends. Secondly we have to add an edge $ v_{j-1}v_{j-3} $ (or $ v_{j+1}v_{j+3} $), making the cubic vertex $ v_{j-2} $ (or $ v_{j+2} $) in a triangle, and its split contains $ \mathcal{P}_{1} $ as a minor, so does as adding an edge $ vv_{j-1} $ (or $ vv_{j+1} $),  then the process ends.

\noindent{\bf Case 2.} Adding an edge $ v_{i}v_{i+2} $ makes the cubic vertex $ v_{i+1} $ in a triangle, then no addition is possible and the split of $ v_{i} $ contains $ \mathcal{P}_{1} $ as a minor, thus the process ends.

Therefore no addition to $ AW_{2n}^{+} $ is possible.
\end{proof}

\section{Extension of Terrahawk}

In this section, we shall characterize the internally 4-connedted $ V_{8}+e $-minor-free graphs which are obtained from terrahawk by repeatedly adding edges or splitting vertices.

\begin{lem}\label{lem3}
Let $ G\succeq $terrahawk be an internally 4-connected graph. Then $ G $ is $ V_{8}+e $-minor-free if and only if $ G $ is planar.
\end{lem}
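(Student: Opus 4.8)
The plan is to prove the two directions separately, the forward implication being immediate and the converse carrying the real content. For the forward direction, note that $V_{8}$ is nonplanar, hence so is $V_{8}+e$, and a planar graph can have no nonplanar minor; therefore every planar internally $4$-connected $G$ is automatically $V_{8}+e$-minor-free. It remains to show that every \emph{nonplanar} internally $4$-connected $G\succeq$ terrahawk contains a $V_{8}+e$ minor.

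For the converse I would grow the terrahawk exactly as in the proof of Lemma~\ref{lem2}, invoking the chain theorem (Theorem~\ref{thm1.2}), for which the terrahawk is one of the base graphs; writing $H_{0}\cong$ terrahawk, one generates all internally $4$-connected graphs above it by repeatedly adding edges and splitting vertices (the new vertices taken cubic, as agreed), while tracking the forbidden-edge set $\mathcal{F}(\text{terrahawk})=\{e: \text{terrahawk}+e\succeq V_{8}+e\}$. Since the terrahawk is $3$-connected and planar it has an essentially unique embedding, whose faces are the outer quadrilateral, four side quadrilaterals, and four triangles through the apex. I would first split the non-edges into their orbits under the order-$8$ automorphism group, namely the top/bottom diagonals, the apex-to-top-face edges, the side-face diagonals, and the space diagonals, and test cofaciality: the top diagonals and the side-face diagonals join cofacial vertices and so their addition keeps the graph planar, whereas the remaining three orbits join non-cofacial pairs and destroy planarity. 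The core computation is then to exhibit, for one representative of each of these three orbits, an explicit branch-set model of $V_{8}+e$, thereby showing that every planarity-breaking addition lies in $\mathcal{F}(\text{terrahawk})$.

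I would handle the splits and the closure of the process in the same style: a split either fails to be internally $4$-connected (and the process terminates after the usual cubic-vertex-in-a-triangle check), stays planar, or turns the graph nonplanar, and in the last case I would again produce a $V_{8}+e$ minor from the forced planar structure around the split vertex together with the apex triangles. Because the planar internally $4$-connected graphs above the terrahawk need not be bounded in size, I would organize the argument around the first operation in a chain $H_{0},\dots,H_{k}\cong G$ that destroys planarity: its predecessor is planar and $3$-connected, hence uniquely embedded, so the offending edge or split is forced to cross the embedding, and minor-monotonicity then propagates the resulting $V_{8}+e$ minor all the way up to $G$.

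The hardest part will be the nonplanar splits, for two reasons. First, unlike an edge addition a single split can destroy planarity even when no single edge addition of the predecessor could, so I cannot merely read off the obstruction from cofaciality but must analyse how the two new cubic vertices redistribute the old neighborhood across the two sides of the embedding. Second, since the planar predecessor in the chain may be arbitrarily large, I need a uniform certificate for the $V_{8}+e$ minor rather than a finite table; I expect to obtain this by showing that any planarity-violating split exposes a Möbius-ladder ($V_{8}$) pattern on eight suitably chosen branch sets, to which the terrahawk's apex-triangle structure supplies the extra distance-$3$ chord, which is precisely the obstruction already recorded in $\mathcal{F}(\text{terrahawk})$.
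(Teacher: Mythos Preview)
Your plan follows the paper's strategy: take the terrahawk as $H_{0}$, run the chain-theorem growth process, and show that every non-planar edge addition or vertex split produces a $V_{8}+e$ minor. The paper's execution is much terser than your outline: up to symmetry it exhibits two non-planar splits (at the two vertex orbits) and three non-planar edge additions, displays an explicit $V_{8}+e$ minor for each in a figure, invokes the ``$X$'' observation of Claim~1 from Lemma~\ref{lem2} to cover pairs of cofacial diagonals, and then concludes in one line that ``both $H_{1}$ and $H_{2}$ contain a $V_{8}+e$-minor, so does any $H_{i}$ $(i\geq3)$''.

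The concern you single out---that the planar predecessor at the first planarity-breaking step may be arbitrarily far from the terrahawk, so a finite check at $H_{0}$ does not literally settle the general step---is legitimate, and the paper does not spell out how the one-line propagation is justified. So you are not taking a different route; you are trying to make rigorous a step the paper treats as self-evident. Your proposed ``uniform certificate'' for a non-planar split of a large planar predecessor is only a sketch, and the paper offers no template for it either. If you want to close this without an open-ended argument, a cleaner alternative is available once Lemma~\ref{lem2} is proved: since the terrahawk contains the cube $AW_{6}$, every non-planar internally $4$-connected $G\succeq$ terrahawk that avoids $V_{8}+e$ would have to lie in $\mathcal{E}(AW_{6}^{+})\cup\{AW_{2n}^{+}:n\geq4\}$, and it suffices to check that none of those graphs has a terrahawk minor. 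That replaces the large-predecessor analysis by a finite (and small) verification.
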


\begin{proof}
To simplify our notation, we denote the terrahawk by $ Ter $, then $ Ter $ is clearly $ V_{8}+e $-minor-free. Thus let $ H_{0}\cong Ter $. Now we analyze graphs obtained from $ Ter $ by splitting vertices or adding edges. 

By symmetry, there are two new non-planar graphs by splitting vertex 1 and 2, respectively, both of which have a $ V_{8}+e $-minor.
What's more, any non-planar addition of $ Ter $ contains $ V_{8}+e $ as a minor (as shown in Figure \ref{t6}). Similer to the analysis of Lemma 2.1, $ Ter $ with an additional $ X $ also contains a $ V_{8}+e $-minor. Thus both $ H_{1} $ and $ H_{2} $ contain a $ V_{8}+e $-minor, so does any $ H_{i} $ $ (i\geq3) $. Therefore, there is no internally 4-connected, non-planar $ V_{8}+e $-minor-free graph generated from $ Ter $. 

\input{6.tpx} 

\end{proof}

\section{Extension of $ C_{n}^{2} $ and $ K_{3,3} $}
In this section, we shall characterize the internally 4-connedted $ V_{8}+e $-minor-free graphs which are obtained from $ C_{n}^{2} $ and $ K_{3,3} $ by repeatedly adding edges or splitting vertices.

Let $ (X_{1}, X_{2}) $ be a partition of the vertex set of $ K_{3,3} $ such that each vertex of $ X_{1} $ is adjacent to each vertex of $ X_{2} $. Define  $ K_{3,3}^{i,j} $, $ i, j=0,1,2,3 $, to be the graph obtained by adding $ i $ edges between vertices of $ X_{1} $ and $ j $ edges between vertices of $ X_{2} $. As every vertex of $ K_{3,3} $ is symmetric to each other, this process is well-defined. We note a new few things about this notation. First,  $ K_{3,3}^{i,j} $ is ismorphic to  $ K_{3,3}^{j,i} $, and second, $ K_{3,3}^{3,3} $ is $ K_{6} $. Together, we call these 10 graphs $ \mathbb{K}_{3,3} $ (as shown in Figure \ref{t13}).

\input{13.tpx}

\begin{lem}
	Let $ G\succeq C_{n}^{2} $ be an internally $ 4 $-connected graph. Then $ G $ is $ V_{8}+e $-minor-free if and only if $ |G|\leq7 $ or $ G\in$$\left\{\right.$internally $ 4 $-connected minors of $ \Gamma\left.\right\}\cup\mathcal{E}(AW_{6}^{+})\cup\mathcal{E}(V_{8}+f) $.
\end{lem}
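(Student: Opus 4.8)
The plan is to reuse the constructive scheme of Lemmas~\ref{lem2} and~\ref{lem3}: apply the chain theorem (Theorem~\ref{thm1.2}) with base graph $C_n^2$ and then follow, step by step, which edge-additions and vertex-splits first create a $V_8+e$ minor. I would begin by settling the base graphs themselves. For $n=5,7$ we have $|C_n^2|\le 7$, so these are $V_8+e$-minor-free by vertex count; and for even $n$ the square $C_{2m}^2$ is the $m$-antiprism, hence planar, so by Theorem~\ref{thm1.1} it has no $V_8$ minor and a fortiori no $V_8+e$ minor. The one base-case fact that needs a genuine argument is the behaviour of the odd squares: a short reduction argument (any $8$-vertex minor of $C_9^2$ is got by deleting one vertex or contracting one edge, and in the small odd cases the resulting graph is planar) shows the small odd squares remain $V_8+e$-minor-free, whereas for large odd $n$ one exhibits a $V_8+e$ minor explicitly by contracting the wrap-around distance-two edges to expose a Wagner graph with its extra chord. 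This bounds to finitely many the squares from which we must grow.

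From the surviving bases I would carry out the addition/split analysis as before, introducing the forbidden-edge set $\mathcal F(G)=\{e:G+e\succeq V_8+e\}$ and exploiting the large automorphism groups of the circulants $C_n^2$ to reduce to a handful of cases up to symmetry. Since $K_5=C_5^2$ is complete, only splits apply to it; each split produces one of a few six-vertex graphs (several of them members of $\mathbb K_{3,3}$, Figure~\ref{t13}), which I would classify up to isomorphism, retaining only the $V_8+e$-minor-free ones and continuing the chain. The governing observation is that the growth $K_5\to\text{Petersen}$ of Figure~\ref{t24} passes through $V_8+f\,(\cong P_2)$ and reaches $V_8+e$ only as a split of $P_3$; hence one stream of the construction produces exactly the extensions in $\mathcal E(V_8+f)$, a second stream through the cube $AW_6$ produces the extensions in $\mathcal E(AW_6^+)$ (the expected overlap with Lemma~\ref{lem2}, since the chain reductions are not unique), and the finitely many remaining free graphs are precisely the internally $4$-connected minors of $\Gamma$. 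For each branch I would either display an explicit $V_8+e$ minor to terminate it (as in Figures~\ref{t21} and~\ref{t6}) or recognise the graph in one of these three families, arguing termination by the standard mechanism: once a split creates a cubic vertex inside a triangle, no further edge can be added and every subsequent split contains an already-treated split as a minor.

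For the converse direction I would verify that every graph on the right-hand side is genuinely $V_8+e$-minor-free: for $\mathcal E(V_8+f)$ and $\mathcal E(AW_6^+)$ this is immediate from their definitions, since edges are added only outside the relevant forbidden set, and for the finitely many internally $4$-connected minors of $\Gamma$ it is a direct check, $\Gamma$ itself being $V_8+e$-minor-free. I would also record that the planar extensions of the antiprism bases, although $V_8+e$-minor-free, are subsumed by case~(i) of Theorem~\ref{thm1.3} and so require no separate listing; this is why the statement is read as a characterization of the \emph{non-planar} free graphs arising in this branch, exactly as in Lemma~\ref{lem2}.

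I expect the principal obstacle to be the size and bookkeeping of the case analysis. Unlike Lemma~\ref{lem2}, where the single family $AW_6^+$ dominates, the chain emanating from $K_5$ splinters into three distinct families, namely $\mathcal E(V_8+f)$, $\mathcal E(AW_6^+)$, and the sporadic set of internally $4$-connected minors of $\Gamma$; the delicate point is to prove this trichotomy exhaustive, so that no split or addition escapes into a fourth family and the sporadic part is captured exactly by the single graph $\Gamma$. Pinning down $\Gamma$ and checking, branch by branch, that every extension either lands in one of the three families or contains $V_8+e$, is where the explicit minor models and the symmetry reduction do the real work.
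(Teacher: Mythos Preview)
Your overall plan is the paper's: sort the bases $C_n^2$ by size and parity, then grow by edge-additions and splits, terminating each branch either by exhibiting a $V_8+e$ minor or by recognising the graph in one of the three target families. But the parenthetical claim about $C_9^2$ is false. Contracting a distance-$2$ edge of $C_9^2$, say $v_0v_2$, does \emph{not} yield a planar graph: writing $w$ for the merged vertex, the $8$-cycle $w,4,6,8,1,3,5,7$ in $C_9^2/v_0v_2$ carries all four main diagonals $w1,34,56,78$ together with the distance-$3$ chord $w8$, so $C_9^2\succeq V_8+e$, exactly as the paper asserts. Hence the odd squares surviving as bases are only $C_5^2$ and $C_7^2$, both already covered by the vertex count, and your ``short reduction argument'' is never needed and would fail where you invoke it.

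More substantively, you route the detailed growth through $K_5$, whereas almost all of the paper's work, and in particular the emergence of $\Gamma$, runs through the \emph{even} bases. A separate case analysis (which your sketch omits) shows that every non-planar graph grown from $C_8^2$ already contains $V_8+e$; this reduces the even side to $C_6^2$. From $C_6^2$ the additions give $K_{3,3}^{2,2},K_{3,3}^{3,2},K_6$ and the unique split $H_1$, whose further splits $H_2^2,H_2^3$ generate $\mathcal E(V_8+f)$ and, via one exceptional split of $H_2^3$, the sporadic graph $\Gamma$. The $K_5$ branch you emphasise the paper dispatches in one line: the only split of $K_5$ is $K_{3,3}^{1,1}$, whose internally $4$-connected extensions are again $K_{3,3}^{2,2},K_{3,3}^{3,2},K_6$, feeding straight back into the $C_6^2$ analysis. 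So your trichotomy is correct, but the route to $\Gamma$ is $C_6^2\to H_1\to H_2^3\to\Gamma$, and the $C_8^2$ step is a genuine piece of work that your proposal leaves out.
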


\begin{proof} We distinguish between two cases depend on the planarity of $ C_{n}^{2} $.
	
	{\bf Case 1.} $ H_{0} $ is planar, that is $ H_{0}\in C_{2n}^{2} (n\geq3) $.
	
	If $ |H_{0}|\geq8 $, since each $ C_{2n}^{2} $ contains $ C_{8}^{2} $ as a minor, let $ H_{0}\cong C_{8}^{2} $. Now we generate internally 4-connected $ V_{8}+e $-minor-free graphs from $ C_{8}^{2} $.
	
	We claim that every non-planar graph generated from $ C_{8}^{2} $ contains $ V_{8}+e $ as a minor. Let $ H_{1} $ be an $ i $-$ 4 $-$ c $ $ V_{8}+e $-minor-free graph. First suppose that $ H_{1} $ is a split of $ C_{8}^{2} $. By symmetry, we just consider the non-planar split of vertex 1, which gives rise to a $ V_{8}+e $-minor. Next suppose that $ H_{1} $ is an addition of $ C_{8}^{2} $. Then $\left\{14, 16, 27, 25, 36, 38, 47, 58\right\}\subseteq \mathcal{F}(C_{8}^{2})$, which means that only the planar graphs in this process avaliable (as shown in Figure \ref{t7}).
	
	\input{7.tpx}
	
	If $ |H_{0}|<8 $, it is clear that $ H_{0} $ is $ V_{8}+e $-minor-free, then we construct the non-planar $ V_{8}+e $-minor-free graphs based on $ C_{6}^{2} $.
	
	We first obtain three $ i $-$ 4 $-$ c $ $ V_{8}+e $-minor-free graphs by adding edges, which are exactly isomorphic to $ K_{3,3}^{2,2}, K_{3,3}^{2,3} $ and $ K_{6} $, and a split of $ C_{6}^{2} $, denoted by $ H_{1} $ (see Figure \ref{t8}). 
	
	Firstly, by symmetry of $ H_{1} $, we can continue splitting vertices 1 and 2, generating the graphs $ H_{2}^{j} (1\leq j\leq3) $. Obviously, $ H_{2}^{1}\in \mathcal{E}(AW_{6}^{+}) $, thus we construct graphs based on $ H_{2}^{2}$ and $ H_{2}^{3}$. Since both $ H_{2}^{2}$ and $ H_{2}^{3}$ contain $ V_{8}+f $ as a minor, we denote the $ i $-$ 4 $-$ c $ $ V_{8}+e $-minor-free graphs with $ \left|V(G)\right|=8 $ obtained from them by $ \mathcal{E}(V_{8}+f) $ (see Figure \ref{t12}). Then we split vertices of them. Note that the split of $ H_{1} $ with additional edges always contains $ H_{2}^{j} $ as a minor, thus we would not analyze any more, and we will not explicitly explain every time.
	
	\input{8.tpx}
	
	\input{12.tpx}	
	
	{\bf case 1a.} Splits of $ H_{2}^{2} $.
	
	By symmetry of $ H_{2}^{2} $, we split the vertex 8, generating 3 graphs with a $ V_{8}+e $-minor (see Figure \ref{t9}).
	
	\input{9.tpx}
	
	{\bf case 1b.} Splits of $ H_{2}^{3} $. 
	
	By symmetry of $ H_{2}^{3} $, we split the vertex 1 and 2, generating graphs with a $ V_{8}+e $-minor, with an exception $ \Gamma $ (see Figure \ref{t10}).
	
	\input{10.tpx}
	
	Then we characterize graphs from $ \Gamma $ by splitting vertices or adding edges. Clearly, any splits of $ \Gamma $ contains a graph generated from $ H_{2}^{3} $ by splitiing vertex 1 as a minor, thus contains a $ V_{8}+e $-minor. Then we claim that adding any edge to $ H_{3} $ gives rise to a $ V_{8}+e $-minor. Actually by symmetry, we can add edges $ \left\{16, 2^{'}6, 2^{'}7, 2^{''}4, 2^{''}5, 2^{''}7, 35, 36, 46, 47, 57, 58, 68\right\} $. Since $ \left\{14, 16, 25, 27, 36, 38, 47, 58\right\}\subseteq\mathcal{F}(H_{2}^{3})\subseteq\mathcal{F}(\Gamma) $, only edges $ 2^{'}6, 2^{''}4, 35, 46, 57, 68 $ is possible, to which also generates a $ V_{8}+e $-minor the $ \Gamma $ adding (see Figure \ref{t11} for an illustration). Therefore the process terminates at $ \Gamma $.
	
	\input{11.tpx}
	
	Secondly since $ K_{3,3}^{2,2}\preceq K_{3,3}^{2,3}\preceq K_{6} $, let $ H_{1}\cong K_{3,3}^{2,2} $.
	
	Now we consider the graphs obtained from $ K_{3,3}^{2,2} $ by splitting vertices. By symmertry, we can split the vertices 1 and 2.
	
	{\bf Case 1.1.} Split the vertex 1.
	
	Note that the splitting of 1 gives rise to two graphs $ F_{1} $ and $ F_{2} $, both of which are not $ i $-$ 4 $-$ c $. We first make them $ i $-$ 4 $-$ c $ by splitting the neighbors of the cubic vertex. Observe that, by symmetry of $ F_{1} $, splitting vertex 6 generating 3 new not $ i $-$ 4 $-$ c $ graphs with at least 2 cubic vertices in different triangles. However, there is just one time to construct $ H_{1} $ from these 3 graphs, and there is no $ i $-$ 4 $-$ c $ graph obtained from them. In addition, splitting vertex 5 generating either graphs contain $ V_{8}+e $ minors or graphs with 2 cubic vertices in distinct triangles, thus the process ends.
	
	By symmetry of $ F_{2} $, we split vertex 2, generating graphs with $ V_{8}+e $ minors, $ H_{2}^{1} $-minor or $ H_{2}^{3} $-minor (in Figure 19), and a not $ i $-$ 4 $-$ c $ graph $ F_{2}^{1} $. We further split vertex 5 of $ F_{2}^{1} $, since $ F_{2}^{1} $ contains 2 cubic vertices in a triangle, generating either graphs contain $ V_{8}+e $-minor or graphs with 2 cubic vertices in different triangles, similar to the discussion above, the process ends.
	
	Next we make $ F_{1} $ and $ F_{2}$ $ i $-$ 4 $-$ c $ by adding some edges, generating three $ i $-$ 4 $-$ c $ graphs $ \left\{H_{2}^{1}, H_{2}^{2},\right.\\ \left.H_{2}^{3}\right\} =\mathcal{C}$ (see Figure \ref{t14}). 
	
	\input{14.tpx}
	
	Now we construct graph $ H_{3} $ from graphs in $ \mathcal{C} $. Observe that $ |V(H_{2}^{i})|=7 $, and we would not give the additions of them in detail. Therefore, we suppose that $ H_{3} $ is a split of $ H_{2}^{i} $. 
	
	{\bf case 1.1a.} Splits of $ H_{2}^{1} $.
	
	By symmertry, we can split the vertices $ 1^{'}, 2, 3, 4 $ of $ H_{2}^{1} $, respectively. We states that any split of $ H_{2}^{1} $ contains $ V_{8}+e $ as a minor (see Figure \ref{t15} for an illustration).
	
	\input{15.tpx}
	
	{\bf case 1.1b.} Splits of $ H_{2}^{2} $.
	
	By symmetry, we can split the vertices $ 1^{'}, 2, 3, 4 $ of $ H_{2}^{2} $, generating graphs with $ V_{8}+e $ minors, or isomorphic to graphs in $ \mathcal{E}(AW_{6}^{+}) $ (shown in Figure \ref{t16}).
	
	\input{16.tpx}
	
	{\bf case 1.1c.} Splits of $ H_{2}^{3} $.
	
	By symmetry, we can split the vertices $ 1^{'}, 2, 4 $ of $ H_{2}^{3} $, generating graphs with $ V_{8}+e $ minors, or graphs with $ H_{2}^{1} $ and $ H_{2}^{2} $ (in Figure 11) minors (shown in Figure \ref{t17}).
	
	\input{17.tpx}
	
	{\bf Case 1.2.} Split the vertex 2.
	
	Note that the splitting of vertex 2 gives rise to an $ i $-$ 4 $-$ c $ graph $ H_{2}^{4} $, and two not  $ i $-$ 4 $-$ c $ graphs $ F_{3} $ and $ F_{4} $. Splitting $ H_{2}^{4} $ generates two graphs in $ \mathcal{E}(AW_{6}^{+}) $ (up to symmetry). Similar to the Case 1.1, we first consider the split of them. By symmetry of $ F_{3} $, we split vertex 1, generating 2 graphs with $ V_{8}+e $ minors and a not $ i $-$ 4 $-$ c $ graph $ F_{3}^{1} $ with 2 cubic vertices in different triangles, thus the process ends. Then for $ F_{4} $, splitting vertex 1 generates a graph with a $ V_{8}+e $-minor and a graph isomorphic to $ F_{2}^{1} $. Splitting vertex 3, generates 4 graphs $ V_{8}+e $ minors, one in $ \mathcal{E}(AW_{6}^{+}) $, one with a $ H_{2}^{2} $-minor (in Figure 11), and two not $ i $-$ 4 $-$ c $ graphs $ F_{4}^{1} $ and $ F_{4}^{2} $. Since $ F_{4}^{1} $ contains two cubic vertices in a triangle, we split their common neighbor, vertex 1, containing the split of vertex 1 of $ F_{4} $ as a minor. And $ F_{4}^{2} $ contains 2 cubic vertices in different triangles, thus the process ends.
	
	Next, we adding edges to $ F_{3} $ and $ F_{4} $, generating three $ i $-$ 4 $-$ c $ graphs $ H_{2}^{5}, H_{2}^{6} $ and $ H_{2}^{7} $ (see Figure \ref{t18}). 
	
	\input{18.tpx}
	
	Similar to the Case 1.1, we just consider the splits of these graphs. The proof for splits of these graphs is of the same flavor for set $ \mathcal{C} $, see Figure \ref{t19}, \ref{t20}, \ref{t22}. Note that the split of $ H_{2}^{6} $ gives rise to two not $ i $-$ 4 $-$ c $ graphs $ F_{5} $ and $ F_{6} $, similar to the Case 1.1, we stop the process.
	
	\input{19.tpx}
	
	\input{20.tpx}
	
	\input{22.tpx}
	
	Then for graphs in $ \mathcal{E}(AW_{6}^{+}) $ and contain $ H_{2}^{i} $ minors (in Figure 11), have been analyzed in section 2, we stop the process. Therefore, if an internally 4-connected graph $ G $ is $ V_{8}+e $-minor-free, then either $ |G|\leq7 $ or $ G\in\mathcal{E}(AW_{6}^{+})\cup\Gamma $.
	
	{\bf Case 2.} $ H_{0} $ is non-planar, that is $ H_{0}\in C_{2n+1}^{2} (n\geq2) $. 
	
	If $ |H_{0}|\geq8 $, it is suffice to consider graph $ C_{9}^{2} $, which contains a $ V_{8}+e $-minor. Thus suppose that $ |H_{0}|<8 $. 
	
	If $ H_{0}\cong C_{7}^{2} $, then any split of $ C_{7}^{2} $ produces a $ V_{8}+e $-minor, and every addition of $ C_{7}^{2} $ is internally 4-connected $ V_{8}+e $-minor-free, and any split of them contains the split of $ C_{7}^{2} $ as minor, thus contains a $ V_{8}+e $-minor.
	
	If $ H_{0}\cong C_{5}^{2} $, since $ C_{5}^{2} $ is complete, let $ H $ be a graph generating from $ C_{5}^{2} $ by splitting a vertex, which is isomorphic to $ K_{3,3}^{1,1} $. However, the internally 4-connected graphs $ H_{1}^{j} $, generating from $ K_{3,3}^{1,1} $ by adding edges, is isomorphic to $ K_{3,3}^{2,2}, K_{3,3}^{2,3} $ and $ K_{6} $, same as Case 1. Note that any split of $ K_{3,3}^{1,1} $ contains 2 vertices in distinct triangles, thus the process ends.
	
\end{proof}

\begin{lem}
Let $ G\succeq K_{3,3} $. Then $ G $ is $ V_{8}+e $-minor-free if and only if $ G\in \left\{\right.internally$ $4$-connected graphs on $\leq7$ vertices$\left.\right\}\cup \left\{\right.$internally $  4$-connected minors of $\Gamma_{2}\left.\right\} $, where $ \Gamma_{2} $ is shown in Figure \ref{t23}.
\end{lem}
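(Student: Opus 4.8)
The plan is to run exactly the same generation procedure used in the previous lemmas: by the chain theorem (Theorem \ref{thm1.2}) it suffices to start from $H_0\cong K_{3,3}$ and produce, up to isomorphism, every internally $4$-connected $V_8+e$-minor-free graph obtainable from $K_{3,3}$ by repeatedly adding edges and splitting vertices. First I would dispose of the pure edge additions. Since $K_{3,3}$ is vertex- and edge-transitive, adding edges inside the two colour classes is well defined up to symmetry and yields precisely the ten graphs of $\mathbb{K}_{3,3}$ (Figure \ref{t13}). Every such graph has only six vertices, so none of them can contain the eight-vertex graph $V_8+e$ as a minor; hence all of $\mathbb{K}_{3,3}$ is $V_8+e$-minor-free and each internally $4$-connected member is accounted for by the "$\le 7$ vertices" clause of the statement.

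Next I would treat the splits. Because $K_{3,3}$ is cubic, no vertex can be split before its degree has been raised to at least four, so the first split must be performed on a vertex of some graph of $\mathbb{K}_{3,3}$; the relevant non-planar parents are $K_{3,3}^{2,2}$, $K_{3,3}^{3,2}$ and $K_6$, and since $K_{3,3}^{2,2}\preceq K_{3,3}^{3,2}\preceq K_6$ it is enough to start the split analysis from $K_{3,3}^{2,2}$. I would use the structural remark from the introduction — a split that fails to be internally $4$-connected produces a cubic vertex lying in a triangle, and a non-internally-$4$-connected edge addition joins two neighbours of a cubic vertex — to bound the cases, enumerating up to the automorphisms of each intermediate graph the splits that give seven-vertex graphs. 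A large part of this overlaps with the preceding $C_n^2$ analysis: indeed $K_{3,3}^{1,1}$ is itself a split of $C_5^2=K_5$, and the shared graphs $K_{3,3}^{2,2},K_{3,3}^{3,2},K_6$ together with the families $\mathcal{E}(AW_6^{+})$, $\mathcal{E}(V_8+f)$ and the graph $\Gamma$ feed directly back into the earlier bookkeeping, so those branches need not be re-examined.

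The only genuinely new terminal object on the $K_{3,3}$ branch is $\Gamma_2$. To isolate it I would, for every seven-vertex graph not already handled, compute the forbidden-edge set $\mathcal{F}(\cdot)=\{e:G+e\succeq V_8+e\}$ and split each vertex whose degree permits it; for every resulting eight-vertex graph I would either exhibit an explicit contraction/deletion model of $V_8+e$, showing it is not minor-free, or verify that it is internally $4$-connected, $V_8+e$-minor-free and a minor of $\Gamma_2$. Termination is then established at $\Gamma_2$: every split of $\Gamma_2$ contains a graph already shown to carry a $V_8+e$ minor, and every edge addition to $\Gamma_2$ outside $\mathcal{F}(\Gamma_2)$ again yields a $V_8+e$ minor, so no internally $4$-connected $V_8+e$-minor-free graph arises beyond the internally $4$-connected minors of $\Gamma_2$ and the graphs on at most seven vertices, which is exactly the claimed characterization.

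The hard part will be the split-by-split case analysis on the seven- and eight-vertex graphs: keeping the symmetry reductions honest so that no inequivalent split is overlooked, correctly certifying when a freshly produced eight-vertex graph already contains $V_8+e$ (which demands displaying the right branch-set decomposition rather than a mere edge count), and pinning down that $\Gamma_2$, and not some slightly larger graph, is the unique maximal $V_8+e$-minor-free graph along this branch. Matching the graphs shared with the $C_n^2$ analysis so that completeness is preserved without redundant recomputation is the other delicate piece of bookkeeping.
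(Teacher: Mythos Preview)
Your outline contains a genuine gap that would cause you to miss the main content of this lemma. You write that ``the relevant non-planar parents are $K_{3,3}^{2,2}$, $K_{3,3}^{3,2}$ and $K_6$'' and that ``it is enough to start the split analysis from $K_{3,3}^{2,2}$''. Neither claim is right. Every graph in $\mathbb{K}_{3,3}$ is non-planar (each contains $K_{3,3}$), and already $K_{3,3}^{1,0}$ has a vertex of degree four, so a split is available there. The paper's route to the new terminal graph goes precisely through that branch: $K_{3,3}\to K_{3,3}^{1,0}\to F_1^1\to V_8\to V_8+f\to\Gamma_1\to\Gamma_2$, with the two splits of low-edge graphs producing the cubic $8$-vertex graph $V_8$. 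If instead you first pass to $K_{3,3}^{2,2}$ (thirteen edges) and only then begin splitting, every $8$-vertex graph you generate has at least fifteen edges and every $10$-vertex graph has at least seventeen; you can never arrive at $V_8$, $V_8+f$, $\Gamma_1$, or the cubic $10$-vertex graph $\Gamma_2$. In other words, your proposed restriction simply replays the $C_n^2$ case analysis of the previous lemma and never reaches the objects that distinguish Lemma~4.2.

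Two smaller points. Your termination argument at $\Gamma_2$ (``every split of $\Gamma_2$ contains a graph already shown to carry a $V_8+e$ minor'') is not the right mechanism: $\Gamma_2$ is cubic, so there are no splits at all, and the work is in showing that every edge addition produces a $V_8+e$ minor. And the appeal to $K_{3,3}^{1,1}$ being a split of $K_5$ does not let you fold the low-edge $\mathbb{K}_{3,3}$ graphs into the $C_n^2$ analysis, because $K_{3,3}^{1,0}$ and $K_{3,3}^{2,0}$ are not handled there; the paper treats their splits separately at the end of the proof.
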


\begin{proof}
Since $ K_{3,3} $ is clearly $ V_{8}+e $-minor-free, let $ H_{0}\cong K_{3,3} $. Then no vertex of $ K_{3,3} $ can be splitted, thus we analyze the additons of $ K_{3,3} $. 

\input{23.tpx}

Let $ F_{1}\cong K_{3,3}^{1,0} $. Since it is not $ i $-$ 4 $-$ c $, we continue adding edges or splitting vertices. By symmetry, we obtain additions $ K_{3,3}^{1,1} $ and $ K_{3,3}^{2,0} $ and a split of $ F_{1} $, namely $ F_{1}^{1} $, both of which are not $ i $-$ 4 $-$ c $. Continuing the process based on $ F_{1}^{1} $ gives rise to an unique $ i $-$ 4 $-$ c $ graph $ H_{1} $, which is isomorphic to $ V_{8} $, and $ \left\{14, 25, 36, 47, 58, 61, 72, 83\right\}\subseteq\mathcal{F}(V_{8}) $. Since $ V_{8} $ contains no vertex of degree 4, we obtain the graph $ V_{8}+f $. We first consider the addition of $ V_{8}+f $, then by symmetry, we obtain a not $ i $-$ 4 $-$ c $ graph $ V_{1} $, $ H_{2}^{3} $ in Figure 11 and others in the set $ \mathcal{E}(V_{8}+f) $. For $ V_{1} $, the addition of it is also in the set $ \mathcal{E}(V_{8}+f) $, so we just able to split the vertex 8, since it contains two cubic vertices with a commom neighbor, generating a graph with a $ V_{8}+e $ minor and 5 not $ i $-$ 4 $-$ c $ graphs , thus the process ends (see Figure \ref{t23}). 

Next we consider the split of $ V_{8}+f $, generating two graphs with $ V_{8}+e $ minors, and an $ i $-$ 4 $-$ c $ $ V_{8}+e $-minor-free graph $ \Gamma_{1} $. Then further splitting vertex and adding edge to $ \Gamma_{1} $. By symmetry, $ \left\{63, 68, 16, 2^{'}6, 2^{''}5, 2^{'}7, 2^{''}7, 2^{''}1, 2^{'}8 \right\}\subseteq\mathcal{F}(\Gamma_{1}) $, then only edge $ 2^{''}4, 2^{''}3 $ or 17 is possible, genereting two graphs with $ V_{8}+e $ minors and a graph isomorphic to $ \Gamma $. Splitting vertex 8 gives rise to a graph with a $ V_{8}+e $-minor, and an $ i$-$4$-$c $ $ V_{8}+e $-minor-graph $ \Gamma_{2} $. Finally, since $ \mathcal{F}(\Gamma_{1})\subseteq\mathcal{F}(\Gamma_{2}) $, we deduce by symmetry that no addition to $ \Gamma_{2} $ is $ V_{8}+e $-minor-free. On the other hand, $ \Gamma_{2} $ is cubic so no split is possible either, therefore the process terminates at $ \Gamma_{2} $.

Note that splitting any vertex of $ K_{3,3}^{1,1} $ remains a cubic vertex in a triangle, so does as splitting vertex 1 of $ K_{3,3}^{2,0} $, and split vertex 2 of $ K_{3,3}^{2,0} $ is exactly an addition of $ H_{1} $, thus not internally 4-connected. Then the process terminates.

\end{proof}

\noindent\textit{\textbf{Proof of Theorem 1.5.}} From Lemmas 2.1, 3.1, 4.1 and 4.2, we obtain a characterization of all internally 4-connected $ V_{8}+e $-minor-free graphs.\qed

\end{document}